\newtheorem{theorem}{Theorem}[section]
\newtheorem{lemma}[theorem]{Lemma}
\theoremstyle{definition}
\title{Conciseness of coprime commutators in finite groups}
\author{Cristina Acciarri}
\address{Cristina Acciarri:  Department of Mathematics, University of Brasilia,
Brasilia-DF, 70910-900 Brazil}
\email{acciarricristina@yahoo.it}
\author{Pavel Shumyatsky} 
\address{Pavel Shumyatsky: Department of Mathematics, University of Brasilia,
Brasilia-DF, 70910-900 Brazil}
\email{pavel@unb.br}
\author{Anitha Thillaisundaram}
\address{Anitha Thillaisundaram: Institut f\"ur Algebra und Geometrie, Mathematische Fakult\"at, Otto-von-Guericke-Universit\"at Magdeburg, 39016 Magdeburg, Germany}
\email{anitha.t@cantab.net}
\keywords{commutators, concise words}
\subjclass[2010]{Primary  20D25;  Secondary 20F12}
\thanks{The research of the first and second authors was supported by CNPq-Brazil.}
\begin{document}
\begin{abstract} 
Let $G$ be a finite group. We show that the order of the subgroup generated by coprime $\gamma_k$-commutators (respectively $\delta_k$-commutators) is bounded in terms of the size of the set of coprime $\gamma_k$-commutators (respectively $\delta_k$-commutators). This is in parallel with the classical theorem due to Turner-Smith that the words $\gamma_k$ and $\delta_k$ are concise. 
\end{abstract}

\maketitle

\section{Introduction}
Let $w$ be a group-word in $n$ variables, and let $G$ be a group. The verbal subgroup $w(G)$ of $G$ determined by $w$ is the subgroup generated by the set $G_w$ consisting of all values $w(g_1,\ldots,g_n)$, where $g_1,\ldots,g_n$ are elements of $G$.  A word $w$ is said to be concise if whenever $G_w$ is finite for a group $G$, it always follows that $w(G)$ is finite. More generally, a word $w$ is said to be concise in a class of groups $\mathcal{X}$ if whenever $G_w$ is finite for a group $G\in \mathcal{X}$, it always follows that $w(G)$ is finite. In the sixties P. Hall asked whether every word is concise but  later Ivanov proved that this problem has a negative solution in its general form \cite{ivanov} (see also \cite[p.\ 439]{ols}). On the other hand, many relevant words are known to be concise. For instance, Turner-Smith \cite{TS} showed that the {\it lower central words} $\gamma_k$ and the {\it derived words} $\delta_k$ are concise; here the words $\gamma_k$ and $\delta_k$ are defined by the positions $\gamma_1=\delta_0=x_1$, $\gamma_{k+1}=[\gamma_k,x_{k+1}]$ and $\delta_{k+1}=[\delta_k,\delta_k]$. Wilson showed in \cite{jwilson} that the multilinear commutator words (outer commutator words) are concise. It has been proved by Merzlyakov \cite{merzlyakov} that every word is concise in the class of linear groups.

In \cite{brakrashu} a word $w$ was called boundedly concise in a class of groups $\mathcal{X}$ if for every integer $m$ there exists a number $\nu=\nu(\mathcal{X},w,m)$ such that whenever $|G_w|\leq m$ for a group $G\in \mathcal{X}$ it always follows that $|w(G)|\leq\nu$. Fern\'andez-Alcober and Morigi \cite{fernandez-morigi} showed that every word which is concise in the class of all groups is actually boundedly concise. Moreover they showed that whenever $w$ is a multilinear commutator word having at most $m$ values in a group $G$, one has $|w(G)|\leq (m-1)^{(m-1)}$. Questions on conciseness of words in the class of residually finite groups have been tackled in \cite{AS}. It was shown that if $w$ is a multilinear commutator word and  $q$ a prime-power, then the word $w^q$ is concise in the class of residually finite groups; and if $w=\gamma_{k}$ is the $k$th lower central word and $q$ a prime-power, then the word $w^{q}$ is boundedly concise in the class of residually finite groups.

The concept of (bounded) conciseness can actually be applied in a much wider context. Suppose $\mathcal{X}$ is a class of groups and $\phi(G)$ is a subset of $G$  for every group $G\in \mathcal{X}$. One can ask whether the subgroup generated by $\phi(G)$ is finite whenever $\phi(G)$ is finite. In the present paper we show bounded conciseness of coprime commutators in finite groups.

The coprime commutators $\gamma_k^*$ and $\delta_k^*$ have been introduced in \cite{forum} as a tool to study properties of finite groups that can be expressed in terms of commutators of elements of coprime orders. Let $G$ be a finite group. Every element of $G$ is a $\gamma_1^*$-commutator as well as a $\delta_0^*$-commutator.
Now let $k\geq 2$ and let $X$ be the set of all elements of $G$ that are powers of $\gamma_{k-1}^*$-commutators. An element $g$ is a $\gamma_k^*$-commutator if there exist $a\in X$ and $b\in G$ such that $g=[a,b]$ and $(|a|,|b|)=1$. For $k\geq 1$ let $Y$ be the set of all elements of $G$ that are powers of $\delta_{k-1}^*$-commutators. The element $g$ is a $\delta_k^*$-commutator if there exist $a,b\in Y$ such that $g=[a,b]$ and $(|a|,|b|)=1$. The subgroups of $G$ generated by all $\gamma_k^*$-commutators and all  $\delta_k^*$-commutators will be denoted by $\gamma_k^*(G)$ and $\delta_k^*(G)$, respectively. One can easily see that if $N$ is a normal subgroup of $G$ and $x$ an element whose image in $G/N$ is a $\gamma_k^*$-commutator (respectively a $\delta_k^*$-commutator), then there exists a $\gamma_k^*$-commutator $y\in G$ (respectively a $\delta_k^*$-commutator) such that $x\in yN$. It was shown in \cite{forum} that $\gamma_k^*(G)=1$ if and only if $G$ is nilpotent and $\delta_k^*(G)=1$ if and only if the Fitting height of $G$ is at most $k$. It follows that for any $k\geq2$ the subgroup $\gamma_k^*(G)$ is precisely the last term of the lower central series of $G$ (which is sometimes denoted by $\gamma_\infty(G)$) while for any $k\geq1$ the subgroup $\delta_k^*(G)$ is precisely the last term of the lower central series of $\delta_{k-1}^*(G)$. In the present paper we prove the following results.

\begin{theorem}\label{gut} Let $k\geq 1$ and $G$ a finite group in which the set of $\gamma_k^*$-commutators has size $m$. Then $|\gamma_k^*(G)|$ is $m$-bounded.
\end{theorem}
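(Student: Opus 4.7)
\emph{Setup and base case.} The case $k=1$ is trivial: every element of $G$ is a $\gamma_1^*$-commutator, so $|G|=m$ and the conclusion is immediate. I therefore focus on $k\geq 2$, where $\gamma_k^*(G)=\gamma_\infty(G)$.

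\emph{First reduction, to the abelian case.} Write $T$ for the set of $\gamma_k^*$-commutators. Since $[a,b]^g=[a^g,b^g]$ preserves both orders and the recursive definition, $T$ is a $G$-invariant set of size $m$, so each $G$-conjugacy class inside $T$ has size at most $m$ and $T$ splits into at most $m$ such classes. Choosing representatives $g_1,\dots,g_r$ with $r\leq m$, the intersection $C=\bigcap_{i=1}^r C_G(g_i)$ has index at most $m^m$ in $G$ and centralizes $\langle T\rangle=\gamma_k^*(G)$. Hence $|\gamma_k^*(G)/Z(\gamma_k^*(G))|\leq m^m$, and Schur's theorem yields an $m$-bound on $|\gamma_k^*(G)'|$. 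Since this commutator subgroup is characteristic in $G$ and the remark in the introduction guarantees that $\gamma_k^*$-commutators still number at most $m$ in the quotient, I would then factor it out and assume henceforth that $N:=\gamma_k^*(G)$ is abelian, making $G$ metanilpotent.

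\emph{The abelian case.} I would decompose $N=\bigoplus_p N_p$ and attack each prime $p$ in isolation by dividing out $\prod_{q\neq p}N_q$. In the resulting quotient, the $p'$-part of $G/N$ acts coprimely on the abelian normal $p$-subgroup $N_p$, and it is here that the coprime condition $(|a|,|b|)=1$ in the definition of $\gamma_k^*$-commutators earns its keep: paired with classical identities such as $[N_p,H,H]=[N_p,H]$ for $p'$-subgroups $H$, the plan is to use induction on $k$---applied to suitable quotients whose $\gamma_{k-1}^*$-structure is still controlled by the original $m$ commutators---in order to bound both $\exp(N_p)$ and the number of primes $p$ actually contributing to $N$.

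\emph{Main obstacle.} The centralizer step is formal, and the standard Schur argument transports the problem to the abelian case; the real difficulty is what happens there. An abelian group generated by $m$ elements can be arbitrarily large, so the $m$-bound on $|N|$ must be extracted entirely from the coprime recursive structure of $\gamma_k^*$-commutators. I expect the hardest part to be the Sylow-by-Sylow analysis showing that each $|N_p|$, and the number of primes $p$ contributing to $N$, are both controlled by $m$; this is where coprime-action lemmas and the inductive hypothesis on $k$ must interact most carefully.
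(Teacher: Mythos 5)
Your setup and first reduction match the paper's proof: the centralizer/Schur step to make $K=\gamma_k^*(G)$ abelian, and the passage to a single Sylow $p$-subgroup $P$ of $K$ by factoring out the other primes, are exactly what the paper does. But the proposal stops precisely where the real work begins. You correctly identify that ``an abelian group generated by $m$ elements can be arbitrarily large'' and that the bound must come from the coprime structure, yet you only announce a ``plan'' to use induction on $k$ and coprime-action identities; no argument is actually given that bounds $|N_p|$. That is a genuine gap, not a routine verification.

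The two ideas you are missing are the following. First, by the coprime-action lemma $[P,H,H]=[P,H]$ applied to a Hall $p'$-subgroup $H$ of the metanilpotent group $G$, one gets $P=[P,H]=[P,{}_{k-1}H]$, and every iterated commutator $[x,h_1,\ldots,h_{k-1}]$ with $x\in P$, $h_i\in H$ is a $\gamma_k^*$-commutator (each stage is a commutator of elements of coprime order, and the definition of $\gamma_i^*$ allows powers at each stage); hence the set $\hat X$ of such iterated commutators sits inside your set $T$ and has size at most $m$. Second --- and this is the decisive point that defuses your ``main obstacle'' --- because $P$ is abelian one has $[x,h_1,\ldots,h_{k-1}]^{\,l}=[x^{\,l},h_1,\ldots,h_{k-1}]$, so the generating set $\hat X$ of $[P,{}_{k-1}H]$ is \emph{closed under taking powers}. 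Consequently every element of $[P,{}_{k-1}H]$ is a product of pairwise distinct elements of $\hat X$, each to the first power, giving $|P|=|[P,{}_{k-1}H]|\le 2^{m}$. No induction on $k$ is needed for this step, and the number of relevant primes is at most $m$ since each nontrivial Sylow subgroup of $K$ contains a nontrivial element of $T$. Without the power-closure observation (Lemma 2.6 of the paper) your outline does not close, since nothing else in the proposal prevents $N_p$ from being a large elementary abelian group on $m$ generators.
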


\begin{theorem}\label{gitis} Let $k\geq 0$ and $G$ a finite group in which the set of $\delta_k^*$-commutators has size $m$. Then $|\delta_k^*(G)|$ is $m$-bounded.
\end{theorem}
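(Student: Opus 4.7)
The plan is to proceed by induction on $k$. The base case $k=0$ is immediate: since every element of $G$ is a $\delta_0^*$-commutator, we have $|G|\le m$. For $k=1$ the set of $\delta_1^*$-commutators coincides with the set of $\gamma_2^*$-commutators---both consist of $[a,b]$ with $a,b\in G$ and $(|a|,|b|)=1$---so Theorem~\ref{gut} applied with $k=2$ already yields the bound on $|\delta_1^*(G)|$.

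For the inductive step, assume $k\ge 2$ and set $H=\delta_{k-1}^*(G)$. From the introductory discussion, $\delta_k^*(G)$ is the last term of the lower central series of $H$, so $\delta_k^*(G)=\gamma_\infty(H)=\delta_1^*(H)$. The strategy is to transfer the hypothesis from $G$ to $H$: show that the number of $\delta_1^*$-commutators of $H$ is bounded by a function of $m$, and then apply the previously established $k=1$ case of the theorem inside $H$ to conclude that $|\delta_1^*(H)|=|\delta_k^*(G)|$ is $m$-bounded.

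The crux---and the main obstacle---is this transfer. Every $\delta_k^*$-commutator of $G$ is a $\delta_1^*$-commutator of $H$, but the reverse comparison is subtle: $\delta_k^*$-commutators draw their entries from the distinguished subset $Y\subseteq H$ consisting of powers of $\delta_{k-1}^*$-commutators of $G$, whereas $\delta_1^*$-commutators of $H$ only require coprime-order entries in $H$. I would aim to prove the structural fact that $Y$ contains every element of $H$ of prime-power order, by showing that each Sylow subgroup of $\delta_{k-1}^*(G)$ is covered by powers of prime-power-order $\delta_{k-1}^*$-commutators of $G$; this should follow from coprime action (Hall--Higman style) arguments together with the recursive description of $\delta_{k-1}^*$-commutators.

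Granted this structural input, an arbitrary $\delta_1^*$-commutator $[a,b]$ of $H$ can be handled by decomposing $a$ and $b$ into their prime-power parts $a_i,b_j\in Y$ and using the standard commutator expansion to write $[a,b]$ as a boundedly long product of $H$-conjugates of the commutators $[a_i,b_j]$, each of which is a $\delta_k^*$-commutator of $G$. This bounds the number of $\delta_1^*$-commutators of $H$ in terms of $m$, and the $k=1$ case of the theorem applied to $H$ (equivalently, Theorem~\ref{gut} with $k=2$ applied to $H$) finishes the proof.
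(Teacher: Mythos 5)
Your base cases are fine: $k=0$ is trivial, and for $k=1$ the identification of $\delta_1^*$-commutators with $\gamma_2^*$-commutators is correct, so Theorem~\ref{gut} applies. The inductive step, however, rests on a structural claim that is not available and is almost certainly false: that every element of prime-power order in $H=\delta_{k-1}^*(G)$ lies in the set $Y$ of powers of $\delta_{k-1}^*$-commutators. The strongest result of this kind is the focal-type statement (Lemma~\ref{foca}), which says only that $P\cap\delta_{k-1}^*(G)$ is \emph{generated} by such powers, not that it is \emph{covered} by them; already for ordinary commutators the analogous coverage statement fails (not every element of $P\cap G'$ is a power of a commutator, even though the Focal Subgroup Theorem gives generation). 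If you retreat to generation, each prime-power part $a_i$ becomes a product of unboundedly many elements of $Y$, and your commutator expansion writes $[a,b]$ as an arbitrarily long product of conjugates of $\delta_k^*$-commutators. That only shows $[a,b]\in\delta_k^*(G)$ --- which is already known, since $\delta_1^*$-values of $H$ lie in $\gamma_\infty(H)=\delta_k^*(G)$ --- and gives no bound on the \emph{number} of $\delta_1^*$-commutators of $H$, which is what your transfer requires. (A secondary issue: even granting coverage, the length of your expansion is governed by the number of primes dividing $|a|$ and $|b|$, which you have not bounded in terms of $m$ at that point.)

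The paper circumvents exactly this obstacle. After the Schur reduction to $K=\delta_k^*(G)$ abelian and the passage to a Sylow $p$-subgroup $P=K$, it takes a Hall $p'$-subgroup $H$ of $Q=\delta_{k-1}^*(G)$ with $P=[P,H]$ (Lemma~\ref{Lemma1}), uses Lemma~\ref{foca} only to produce a normal \emph{generating} subset $B$ of $H$ consisting of powers of $\delta_{k-1}^*$-commutators of $p'$-order, and then invokes Lemma~\ref{Pavel} to convert generation of $H$ by $B$ into generation of $[P,H]$ by the subgroups $[P,b_1,\ldots,b_k]$ with $b_i\in B$. Each of these subgroups consists of $\delta_k^*$-commutators by Lemma~\ref{lemmaforum} and so has $m$-bounded order by Lemma~\ref{Lemma3}. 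This machinery is what replaces the coverage claim your argument needs; without an analogue of Lemma~\ref{Pavel} (or a proof of your coverage assertion), the proposal does not go through.
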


We remark that the bounds for $|\gamma_k^*(G)|$ and $|\delta_k^*(G)|$ in the above results do not depend on $k$. Thus, we observe here the phenomenon that in \cite{fernandez-morigi} was dubbed ``uniform conciseness". We make no attempts to provide explicit bounds for $|\gamma^*_k(G)|$ and $|\delta^*_k(G)|$ in Theorems \ref{gut} and \ref{gitis}. Throughout the paper we use the term $m$-bounded to mean that the bound is a function of $m$.

\section{Preliminaries}

We begin with a well-known result about coprime actions on finite groups. Recall that $[K,H]$ is the subgroup generated by $\{[k,h]:k\in K, h\in H\}$, and $[K,_iH]=[[K,_{i-1}H],H]$ for $i\ge2$.

\begin{lemma}[\cite{Isaacs}, Lemma 4.29]
\label{Lemma4.29}
Let $A$ act via automorphisms on $G$, where $A$ and $G$ are finite groups, and suppose that $(|G|,|A|)=1$. Then $[G,A,A]=[G,A]$.
\end{lemma}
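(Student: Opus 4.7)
\smallskip
\noindent\emph{Proof proposal.} The inclusion $[G,A,A]\subseteq [G,A]$ is immediate: since $[G,A]$ is an $A$-invariant subgroup of $G$, every commutator $[h,a]$ with $h\in[G,A]$ and $a\in A$ already lies in $[G,A]$. The content of the lemma is therefore the reverse inclusion, and my plan is to exploit the classical coprime-action decomposition
\[
G=[G,A]\,C_G(A),
\]
which is the standard companion to the hypothesis $(|G|,|A|)=1$.

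Set $H=[G,A]$. A typical generator of $H$ has the form $[g,a]$ with $g\in G$ and $a\in A$. Using the decomposition above, write $g=hc$ with $h\in H$ and $c\in C_G(A)$, and expand
\[
[g,a]=[hc,a]=[h,a]^{c}\,[c,a]=[h,a]^{c},
\]
the last equality because $c\in C_G(A)$. Now $[H,A]$ is normal in the semidirect product $G\rtimes A$ (the general principle that $[X,Y]$ is normal in $\langle X,Y\rangle$), so $c$ normalises $[H,A]$ and hence $[h,a]^{c}\in [H,A]=[G,A,A]$. Since the elements $[g,a]$ generate $H$, this forces $H\subseteq[G,A,A]$, completing the reverse inclusion.

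The genuinely nontrivial ingredient is the decomposition $G=[G,A]\,C_G(A)$. For solvable $A$ this is standard, proved by induction on $|G|$ via Fitting's theorem applied to an abelian chief factor on which $A$ acts coprimely; that solvable case is moreover what is ultimately needed for the applications in this paper, where the Fitting series governs the structure. In full generality one invokes the stronger coprime action theorem, which rests on the classification of finite simple groups. Once the decomposition is in hand, the rest of the argument is the short commutator manipulation above, so the main obstacle is really just this appeal to the structure of coprime actions.
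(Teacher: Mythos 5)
Your argument is essentially the standard one — indeed it is the proof given in the cited source (Isaacs, \emph{Finite Group Theory}), where Lemma 4.29 is deduced from the coprime decomposition $G=[G,A]\,C_G(A)$ of Lemma 4.28 in exactly the way you describe; the paper itself offers no proof, since it simply quotes the result. Two small corrections. First, your justification that $c$ normalises $[H,A]$ is off: the principle ``$[X,Y]\trianglelefteq\langle X,Y\rangle$'' only gives normality in $HA$, and $c\in C_G(A)$ need not lie in $HA$. The fix is a direct computation: $[h,a]^c=[h^c,a^c]=[h^c,a]\in[H,A]$, since $c$ centralises $a$ and $H=[G,A]$ is normal in $G$. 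Second, the decomposition $G=[G,A]\,C_G(A)$ does not rest on the classification of finite simple groups; it follows from the conjugacy part of the Schur--Zassenhaus theorem, whose only nonelementary input in full generality is the Feit--Thompson odd order theorem (automatically applicable here because $(|G|,|A|)=1$ forces one of the groups to have odd order). With these adjustments the proof is complete and coincides with the textbook argument.
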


For the following result from \cite{Pavel2}, recall that a subset $B$ of a group $A$ is \textit{normal} if $B$ is a union of conjugacy classes of $A$. 

\begin{lemma}
\label{Pavel}
Let $A$ be a group of automorphisms of a finite group $G$ with $(|A|,|G|)=1$. Suppose that $B$ is a normal subset of $A$ such that $A=\langle B \rangle$. Let $k\ge 1$ be an integer. Then $[G,A]$ is generated by the subgroups of the form $[G,b_1,\ldots,b_k]$, where $b_1,\ldots,b_k\in B$.
\end{lemma}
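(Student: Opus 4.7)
The plan is to set $N := \langle [G, b_1, \ldots, b_k] : b_1, \ldots, b_k \in B \rangle$ and to show $N = [G, A]$. The inclusion $N \leq [G, A]$ is immediate: iterating Lemma~\ref{Lemma4.29} gives $[G, A] = [G, A, A] = \cdots = [G, {}_k A]$, so each generating subgroup $[G, b_1, \ldots, b_k]$ sits inside $[G, {}_k A] = [G, A]$.

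For the reverse inclusion the plan is a quotient argument. Since $B$ is a normal subset of $A$, conjugation by any $a \in A$ sends $[G, b_1, \ldots, b_k]$ to $[G, b_1^a, \ldots, b_k^a]$, itself one of the defining subgroups of $N$; hence $N$ is $A$-invariant. Assuming also that $N$ is normal in $G$ (the key technical point, addressed below), consider $\bar G := G/N$ with its induced $A$-action. For each $b \in B$, specializing $b_1 = \cdots = b_k = b$ gives $[G, {}_k b] \leq N$, so $[\bar G, {}_k b] = 1$. Applying Lemma~\ref{Lemma4.29} to the coprime cyclic action of $\langle b \rangle$ on $\bar G$ iteratively collapses this to $[\bar G, b] = 1$; hence each $b \in B$ centralizes $\bar G$, so does $A = \langle B \rangle$, and $[G, A] \leq N$ follows.

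The main expected obstacle is verifying that $N$ is normal in $G$. For $k = 1$ this is conceptually clean: $[G, \langle b \rangle]$ is normal in the semidirect product $G \rtimes \langle b \rangle$ because $G / [G, \langle b \rangle]$ carries a trivial $\langle b \rangle$-action, and since $[G, b]$ is always $b$-invariant one has $[G, \langle b \rangle] = [G, b]$; thus $[G, b] \triangleleft G$ and so is $N$. For $k \geq 2$ one would induct on $k$, handling the correction terms of the shape $[x^g, [g, b_k^{-1}]]$ (with $x \in [G, b_1, \ldots, b_{k-1}]$ and $g \in G$) that appear in the semidirect-product expansion $[x, b_k]^g = [x^g, [g, b_k^{-1}]] \cdot [x^{g^{b_k^{-1}}}, b_k]$; these take values in commutators between subgroups of $[G, A]$ and must be shown to be absorbed into $N$, which is where the coprime hypothesis, together with the $A$-invariance already established, play the decisive role.
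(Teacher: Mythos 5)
The paper does not actually prove this lemma: it is quoted from \cite{Pavel2}, so there is no in-paper argument to measure yours against. Taken on its own terms, your proposal is sound for $k=1$: the containment $N\le[G,A]$ via iterated use of Lemma~\ref{Lemma4.29}, the $A$-invariance of $N$ coming from normality of the subset $B$, and the quotient argument forcing each $b\in B$ (hence all of $A=\langle B\rangle$) to centralize $G/N$ are all correct, and for $k=1$ the normality of $[G,b]=[G,\langle b\rangle]$ in $G\rtimes\langle b\rangle$ is indeed standard.

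For $k\ge 2$, however, there is a genuine gap precisely at the point you flag: you never establish that $N$ is normal in $G$, and without that the passage to $\bar G=G/N$ is not available. The correction term $[x^g,[g,b_k^{-1}]]$, with $x\in[G,b_1,\ldots,b_{k-1}]$ and $g\in G$, lies in $[[G,b_1,\ldots,b_{k-1}],[G,b_k^{-1}]]$, a commutator of two subgroups of $[G,A]$; a priori this is just some subgroup of $[G,A]'$, and there is no visible reason why it should land in $N=\langle[G,b_1,\ldots,b_k]\rangle$. Establishing that it does would essentially require knowing in advance that $N$ contains $[G,A]'$, which is most of what you are trying to prove, so the argument as sketched is circular. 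Coprimality does not obviously help either: both entries of that correction commutator, $x^g$ and $[g,b_k^{-1}]$, are elements of $G$, so the hypothesis $(|A|,|G|)=1$ has no purchase on it. An induction on $k$ does not close the loop for the same reason: granting $[G,A]=\langle[G,b_1,\ldots,b_{k-1}]\rangle$ and $[G,A]=\langle[[G,A],b]:b\in B\rangle$, the expansion of $[\langle H_i\rangle,b]$ via $[xy,b]=[x,b]^y[y,b]$ produces only the normal closure of $\langle[H_i,b]\rangle$ in $[G,A]$, which is the same normality problem again. A complete proof needs an additional idea at this juncture (for instance an induction on $|G|$ passing to suitable $A$-invariant normal subgroups or quotients, as in \cite{Pavel2}) rather than a term-by-term absorption of conjugates.
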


The following is an elementary property of $\delta^*_k$-commutators.

\begin{lemma}
\label{Lemma0}
Let $G$ be a finite group. For $k$ a non-negative integer, 
\[
\delta^*_k(\delta^*_1(G)) = \delta^*_{k+1}(G).
\]
\end{lemma}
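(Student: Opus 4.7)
The plan is to run induction on $k$ and to appeal to the structural identity $\delta_j^*(L)=\gamma_\infty(\delta_{j-1}^*(L))$, valid for every finite group $L$ and every integer $j\geq 1$, which is recalled from \cite{forum} in the paragraph preceding Theorem~\ref{gut}.

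For the base case $k=0$, every element of any finite group $L$ is, by definition, a $\delta_0^*$-commutator, so $\delta_0^*(L)=L$. Specialising to $L=\delta_1^*(G)$ yields $\delta_0^*(\delta_1^*(G))=\delta_1^*(G)$, which agrees with $\delta_{k+1}^*(G)=\delta_1^*(G)$.

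For the inductive step, suppose $k\geq 1$ and that $\delta_{k-1}^*(\delta_1^*(G))=\delta_k^*(G)$ already holds. Writing $H=\delta_1^*(G)$, I would apply the recalled identity first to $H$ with $j=k$ and then to $G$ with $j=k+1$; inserting the induction hypothesis in the middle one obtains
\[
\delta_k^*(H)=\gamma_\infty(\delta_{k-1}^*(H))=\gamma_\infty(\delta_k^*(G))=\delta_{k+1}^*(G),
\]
completing the induction.

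The only mild subtlety, which I would flag but not belabour, is that the definition of $\delta_j^*$-commutator inside the subgroup $H\leq G$ is unambiguous: orders of elements coincide in $H$ and in $G$, so the phrases ``power of a $\delta_{j-1}^*$-commutator of $H$'' and ``coprime orders'' need no reinterpretation, and the identity from \cite{forum} applies to $H$ verbatim. Beyond this, the argument is essentially a bookkeeping reduction to a known characterization, and I do not anticipate a genuine obstacle.
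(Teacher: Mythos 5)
Your proposal is correct and follows essentially the same route as the paper: induction on $k$, with the base case coming straight from the definition of $\delta^*_0$-commutators and the inductive step obtained by chaining the identity $\delta^*_j(L)=\gamma_\infty(\delta^*_{j-1}(L))$ applied to $G$ and to $\delta^*_1(G)$ through the induction hypothesis. Your explicit remark that the identity applies verbatim to the subgroup $\delta^*_1(G)$ is the same (tacit) step the paper phrases as ``viewing $\delta^*_1(G)$ as the group in consideration.''
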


\begin{proof}
We argue by induction. For $k=0$, the result is obvious by the definition of $\delta^*_0$-commutators.

Suppose the result holds for $k-1$. So
\[
\delta^*_{k-1}(\delta^*_1(G))=\delta^*_k(G).
\]

It was mentioned in the introduction that $\delta^*_{k+1}(G)=\gamma_\infty(\delta^*_k(G))$. By induction,
\[
\delta^*_{k+1}(G)=\gamma_\infty(\delta^*_{k-1}(\delta^*_1(G))),
\]
and viewing  $\delta^*_1(G)$ as the group in consideration, we have 
\[\gamma_\infty(\delta^*_{k-1}(\delta^*_1(G)))=\delta^*_k(\delta^*_1(G))
\]
 as required.
\end{proof}

Here is a helpful observation that we will use in both of our main results. Recall that a Hall subgroup of a finite group is a subgroup whose order is coprime to its index. Also, a finite group $G$ is metanilpotent if and only if $\gamma_\infty (G)$ is nilpotent.

\begin{lemma}
\label{Lemma1}
Let $G$ be a finite metanilpotent group and $P$ a Sylow $p$-subgroup of $\gamma_\infty(G)$, and let $H$ be a Hall $p'$-subgroup of $G$. Then $P=[P,H]$.
\end{lemma}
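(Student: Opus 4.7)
The plan is to prove the nontrivial inclusion $P \subseteq [P,H]$ directly, without induction. The starting point is the identity $P = [P, G]$: since the lower central series of a finite group stabilises, $\gamma_\infty(G) = [\gamma_\infty(G), G]$, and since $\gamma_\infty(G)$ is nilpotent by hypothesis it decomposes as $P \times Q$, with $Q$ its Hall $p'$-subgroup. Both factors are $G$-invariant (as characteristic subgroups of $\gamma_\infty(G)$), so comparing $p$-components in $\gamma_\infty(G) = [\gamma_\infty(G), G]$ gives $P = [P, G]$.

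Next I would pass to $\overline{G} := G/[P,H]$. By construction $\overline H$ centralises $\overline P$; since $\overline P$ is normal in $\overline G$, the centraliser $C_{\overline G}(\overline P)$ is normal in $\overline G$ and contains $\overline H$. Hence its index divides $[G:H]$, a power of $p$, and so $\widetilde G := \overline G/C_{\overline G}(\overline P)$ is a $p$-group. The identity $P = [P, G]$ descends to $\overline P = [\overline P, \overline G]$, and because $C_{\overline G}(\overline P)$ acts trivially on $\overline P$, this equals $[\overline P, \widetilde G]$.

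To conclude, I would form the semidirect product $A := \overline P \rtimes \widetilde G$ using the faithful action of $\widetilde G$ on $\overline P$; then $A$ is a finite $p$-group, hence nilpotent. The commutator $[\overline P, \widetilde G]$ computed in $A$ coincides with $[\overline P, \overline G]$ computed in $\overline G$, so $\overline P = [\overline P, \widetilde G]$ inside $A$. Iterating this equality and using the standard inclusion $[\gamma_i(A), \gamma_j(A)] \leq \gamma_{i+j}(A)$ yields $\overline P \leq \gamma_n(A)$ for every $n \geq 1$, which forces $\overline P = 1$, i.e.\ $P \subseteq [P,H]$.

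The main obstacle I anticipate is the final step: arguing that, once $\overline H$ centralises $\overline P$, one necessarily has $\overline P = 1$. The key idea is that the whole action of $\overline G$ on $\overline P$ factors through a $p$-group, so that $\overline P$ lives inside a nilpotent $p$-group while remaining perfect with respect to that action and must therefore collapse. The metanilpotent hypothesis enters precisely through $P = [P,G]$, which is what makes $\overline P$ perfect under the action of $\widetilde G$ in the first place.
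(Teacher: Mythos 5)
Your strategy is genuinely different from the paper's (which first reduces modulo $\mathcal{O}_{p'}(G)$, observes that the full Sylow $p$-subgroup $P_1$ of $G$ is then normal with $\gamma_\infty(G)=[P_1,H]$, and finishes with the coprime-action identity $[P_1,H,H]=[P_1,H]$ of Lemma \ref{Lemma4.29}), and most of it is correct: the derivation of $P=[P,G]$ from the splitting $\gamma_\infty(G)=P\times Q$ is fine, and so is the endgame, since a $p$-group acting on a finite $p$-group $V$ with $[V,\widetilde G]=V$ forces $V=1$.

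There is, however, one step the whole argument hangs on that you do not justify: to form $\overline G:=G/[P,H]$ you need $[P,H]$ to be normal in $G$, and this is not automatic. What comes for free is only that $[P,H]$ is normal in $PH$; conjugation by $g\in G$ carries $[P,H]$ to $[P,H^g]$, and since $PH$ may be a proper subgroup of $G$ (the Sylow $p$-subgroup of $G$ is in general strictly larger than $P$), there is genuinely something to prove. The normality does hold here, but its proof uses the hypotheses in an essential way: writing $Q$ for the Hall $p'$-subgroup of the nilpotent group $\gamma_\infty(G)$, one has $Q\le \mathcal{O}_{p'}(G)\le H$, so $PH=H\gamma_\infty(G)$, which is normal in $G$ because its image in the nilpotent quotient $G/\gamma_\infty(G)$ is a Hall subgroup and hence normal; then $H^g$ is a complement to $P$ in $PH$, so by the conjugacy part of Schur--Zassenhaus $H^g=H^y$ for some $y\in P$, giving $[P,H^g]=[P,H]^y=[P,H]$. (Alternatively, run your argument with $N:=[P,PH]=P'[P,H]$, which is normal in $G$ as a commutator of two normal subgroups; you obtain $P=P'[P,H]$, so $P/[P,H]$ is a perfect finite $p$-group and hence trivial.) Either repair completes your proof, but as written the passage to $G/[P,H]$ is a gap.
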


\begin{proof}
For simplicity, we write $K$ for $\gamma_\infty(G)$.
By passing to the quotient $G/\mathcal{O}_{p'}(G)$, we may assume that $P=K$. 

Let $P_1$ be a Sylow $p$-subgroup of $G$. So $G=P_1 H$. Now $P_1 /P$ is normal in $G/P$ as $G/P$ is nilpotent, but also $P \le P_1$; hence $P_1$ is normal in $G$. It follows that $K=[P_1,H]$, since in a nilpotent group all coprime elements commute. By Lemma \ref{Lemma4.29}, $[P_1,H,H]=[P_1,H]=P$, and so $P=[P_1,H]=[P,H]$.
\end{proof}

As it turns out, in the proofs of our main results we often reduce to the following case.
\begin{lemma}
\label{Lemma3}
Let $i$ and $m$ be positive integers. Let $P$ be an abelian $p$-group acted on by a $p'$-group $A$ such that 
\[
|\{[x,a_1,\ldots,a_i]:x\in P, a_1,\ldots,a_i\in A\}|=m.
\]
Then $|[P,_i A]|=2^m$, so is $m$-bounded.
\end{lemma}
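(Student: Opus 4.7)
Plan. The strategy is to linearize the commutators using that $P$ is abelian. Since $P$ is abelian, the identity $[xy,a]=[x,a][y,a]$ holds and iterates, so for every fixed tuple $(a_1,\ldots,a_i)\in A^i$ the map $\phi:P\to P$ given by $\phi(x)=[x,a_1,\ldots,a_i]$ is a group endomorphism of $P$. In particular the image $[P,a_1,\ldots,a_i]$ is a subgroup of $P$ contained in the $m$-element set
\[
S=\{[x,a_1,\ldots,a_i]:x\in P,\ a_j\in A\},
\]
and the multilinearity of $\phi$ in $x$ yields $k\cdot[x,a_1,\ldots,a_i]=[kx,a_1,\ldots,a_i]\in S$ for every integer $k$. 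Consequently the cyclic subgroup $\langle s\rangle$ generated by any $s\in S$ is contained in $S\cup\{0\}$, so every element of $S$ has order at most $m$.

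Next, iterating Lemma \ref{Lemma4.29} one obtains $[P,_iA]=[P,A]$, and then Lemma \ref{Pavel} (applied with $B=A$) shows that $[P,A]$ is generated by the subgroups of the form $[P,a_1,\ldots,a_i]$, hence by the elements of $S$. Thus $V:=[P,_iA]$ is an abelian $p$-group admitting a generating set of size at most $m$, each of whose elements has order at most $m$. Already these two properties yield an $m$-bounded estimate $|V|\le m^m$ via the fundamental theorem of finite abelian groups, which suffices to establish the qualitative $m$-boundedness statement.

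To sharpen this to $|V|\le 2^m$ as claimed, one exploits that the generating set $S$ is closed under multiplication by integers: a cyclic subgroup $\langle s\rangle\subseteq V$ of order $p^k$ contributes $p^k-1$ distinct nonzero elements to $S$, so the distinct cyclic subgroups arising this way occupy $S$ very efficiently. Combined with the semisimplicity of $V$ as a module for $A$ over the prime field (afforded by the coprime action, which allows a Maschke-type decomposition), one can choose a cyclic decomposition of $V$ whose components $\langle s_j\rangle$ have orders $p^{k_j}$ satisfying $\sum_j(p^{k_j}-1)\le m-1$. The main technical obstacle is precisely this last combinatorial step of converting that inequality, together with the constraints imposed by the $A$-module structure on how the cyclic summands interact, into the sharp bound $|V|=\prod_j p^{k_j}\le 2^m$.
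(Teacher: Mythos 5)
Your first two paragraphs do constitute a correct, self-contained proof of the qualitative claim, which is all that is used later in the paper: since $P$ is abelian, each map $x\mapsto[x,a_1,\ldots,a_i]$ is an endomorphism of $P$, so $S\cup\{1\}$ contains the cyclic subgroup generated by each of its elements, every $s\in S$ has order at most $m$, and $[P,_iA]$ is an abelian group generated by the at most $m$ elements of $S$; hence $|[P,_iA]|\le m^m$. (Invoking Lemmas \ref{Lemma4.29} and \ref{Pavel} to obtain the generation statement works but is overkill: the same multilinearity that makes $\phi$ an endomorphism already shows directly that $[P,_iA]=\langle S\rangle$.)

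The gap is in your third paragraph: you do not actually prove the stated bound $2^m$ (to be read as $\le 2^m$; the equality in the statement is evidently a typo), and the sketch you give would not go through as written. The group $V=[P,_iA]$ need not be elementary abelian, so it is not a module over the prime field and no Maschke-type semisimplicity is available; moreover there is no reason why $V$ should admit a direct cyclic decomposition whose summands are generated by elements of $S$, so the inequality $\sum_j(p^{k_j}-1)\le m-1$ is unjustified. The paper's own argument for $2^m$ is far more elementary and bypasses all of this: writing $S=\{c_1,\ldots,c_m\}$ and using $[x,a_1,\ldots,a_i]^l=[x^l,a_1,\ldots,a_i]$, any $g\in[P,_iA]$ is a product $c_1^{l_1}\cdots c_m^{l_m}$, and whenever some $l_j\ge 2$ the factor $c_j^{l_j}$ may be replaced by a single element of $S$; each such replacement strictly decreases the total number of factors, so the process terminates with all exponents in $\{0,1\}$, giving $|[P,_iA]|\le 2^m$. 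You should either adopt this rewriting argument or settle for the (still $m$-bounded) estimate $m^m$.
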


\begin{proof}
We enumerate the set $\{[x,a_1,\ldots,a_i]:x\in P, a_1,\ldots,a_i \in A\}$ as $\{c_1,\ldots,c_m\}$.
As $P$ is abelian, we have that
\begin{equation*}
\label{eq:*}
[x,a_1,\ldots,a_i]^l =[x^l,a_1,\ldots,a_i]\quad\quad\quad (\dagger)
\end{equation*}
for all $x\in P, a_1,\ldots,a_i\in A$, and $l$ a positive integer.

Consider $g\in[P,_iA]$, which can be expressed as some product $c_1^{l_1} \ldots c_m^{l_m}$  for non-negative integers $l_1,\ldots,l_m$. We claim that $l_1,\ldots,l_m\in \{0,1\}$. For, if $l_j>1$ with $j\in \{1,\ldots,m\}$, we know from ($\dagger$) that $c_j^{l_j}\in \{c_1,\ldots,c_m\}$. We replace all such $c_j^{l_j}$ accordingly, so that $g$ is now expressed as $c_1^{k_1}\ldots c_m^{k_m}$  with $k_1,\ldots,k_m\in\{0,1\}$. Hence $|[P,_iA]|=2^m$.
\end{proof}

The well-known Focal Subgroup Theorem \cite{Rose} states that if $G$ is a finite group and $P$ a Sylow $p$-subgroup of $G$, then $P\cap G'$ is generated by the set of commutators $\{ [g,z] \mid g\in G,\ z\in P,\ [g,z]\in P \}$. In particular, it follows that $P\cap G'$ can be generated by commutators  lying in $P$. This observation led to the question on generation of Sylow subgroups of verbal subgroups of finite groups. More specifically, the following problem was addressed in \cite{focal}.

Given a multilinear commutator word $w$ and a Sylow $p$-subgroup $P$ of  a finite group $G$, is it true that $P\cap w(G)$ can be generated by $w$-values lying in $P$?

The answer to this is still unknown. The main result of \cite{focal} is that if $G$ has order $p^an$, where $n$ is not divisible by $p$,  then $P\cap w(G)$ is generated by $n$th powers of $w$-values. In the present paper we will require a result on generation of Sylow subgroups of $\delta^*_k(G)$.

\begin{lemma}\label{foca}
Let $k\ge 0$ and let $G$ be a finite soluble group of order $p^an$, where $p$ is a prime and $n$ is not divisible by $p$, and let $P$ be a Sylow $p$-subgroup of $G$. Then $P\cap \delta^*_k(G)$ is generated by $n$th powers of $\delta^*_k$-commutators lying in $P$.
\end{lemma}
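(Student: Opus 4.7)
The approach is to adapt the argument of \cite{focal} used for multilinear commutator words to the coprime-commutator setting. Set
\[
M := \langle c^n : c \text{ is a } \delta_k^*\text{-commutator of } G,\ c^n \in P \rangle;
\]
the inclusion $M \leq P \cap \delta_k^*(G)$ is immediate, and the content of the lemma is the reverse inclusion. A useful preliminary remark is that for every $\delta_k^*$-commutator $c$ the $p'$-part of $|c|$ divides $n$, so $c^n$ is automatically a $p$-element; by Sylow every $\delta_k^*$-commutator has a $G$-conjugate whose $n$th power lies in $P$.

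I would prove the reverse inclusion by induction on $|G|$. Choose a minimal normal subgroup $N$ of $G$ contained in $\delta_k^*(G)$ (if $\delta_k^*(G)=1$ there is nothing to show); by solubility $N$ is an elementary abelian $q$-group. Using $\delta_k^*(G/N) = \delta_k^*(G)/N$ together with the lifting of $\delta_k^*$-commutators through $G\twoheadrightarrow G/N$ recorded in the introduction, the inductive hypothesis applies in $\bar G = G/N$. When $q\neq p$ one has $N\cap P=1$, $\bar P = PN/N$ has index $n/|N|$ in $\bar G$, and the identity $c^n = (c^{n/|N|})^{|N|}$ combined with $\gcd(|N|, p)=1$ transfers generators of the analogue of $M$ in $\bar G$ back to generators of $M$ in $P$; since $M\cap N = 1$, the natural injection $M\hookrightarrow \bar G$ forces $M = P\cap\delta_k^*(G)$.

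The case $q=p$, where $N\leq P$, is the crux. Induction in $\bar G$ yields $P\cap\delta_k^*(G) = MN$, reducing the task to showing $N\leq M$; this is the main obstacle. Let $H$ be a Hall $p'$-subgroup of $G$, provided by solubility. Lemmas \ref{Lemma4.29} and \ref{Pavel}, applied to the coprime action of $H$ on the abelian $p$-group $N$, describe $[N, H]$ as generated by iterated commutators $[x, h_1, \ldots, h_r]$ with $x\in N$ and $h_i\in H$. Combined with the recursive definition of $\delta_k^*$-commutators and the fact that $x\in N\leq\delta_k^*(G)$ can be presented in terms of (powers of) $\delta_{k-1}^*$-commutators, these iterated commutators, after passing to $n$th powers (which is harmless on the $p$-group $N$, as exponentiation by $n$ is an automorphism of $N$), are recognized as $n$th powers of $\delta_k^*$-commutators lying in $P$, giving $[N, H] \leq M$. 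A careful analysis of the fixed-point part $C_N(H)$, using the $G$-minimality of $N$ and the structure of $\delta_k^*(G)$, then closes the gap and yields $N\leq M$, completing the induction.
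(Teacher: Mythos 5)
There is a genuine gap, and it sits exactly where you admit the crux lies: the case $q=p$, i.e.\ showing $N\leq M$ for a minimal normal $p$-subgroup $N\leq \delta^*_k(G)$. Your plan is to get $[N,H]\leq M$ and then ``close the gap'' at $C_N(H)$, but neither half is actually carried out. For $[N,H]$: Lemma \ref{Pavel} expresses $[N,H]$ via commutators $[x,b_1,\ldots,b_r]$ only after $H$ has been generated by a suitable normal subset $B$, and to recognize such an element as a $\delta^*_k$-commutator you need, by the recursive definition and by Lemma \ref{lemmaforum}, that the $b_i$ are powers of $\delta^*_{k-1}$-commutators of $p'$ order (which itself requires an inductive appeal to the lemma at level $k-1$ for the subgroup generating $H$ --- this is not set up in your induction on $|G|$ alone) and that the length of the commutator chain matches the index $k$ exactly; arbitrary $h_i\in H$ and arbitrary length $r$ do not produce $\delta^*_k$-commutators. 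For $C_N(H)$: no argument is given at all, and the decomposition $N=[N,H]\times C_N(H)$ is useless precisely in the worst case, e.g.\ when $N$ is central in $G$, where $[N,H]=1$ and the entire burden falls on the unproved fixed-point part. As written, the proof does not establish the lemma.

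For comparison, the paper avoids the minimal-normal-subgroup route entirely. In a minimal counterexample it uses Lemma \ref{Lemma0} to write $\delta^*_{k+1}(G)=\delta^*_k(\delta^*_1(G))$ and applies induction to the proper subgroup $\delta^*_1(G)$, so that $P\cap\delta^*_{k+1}(G)$ is generated by the right kind of elements; then Lemma \ref{Lemma2.2} applied to the quotient $G/\delta^*_{k+1}(G)$ forces $\delta^*_{k+1}(G)=1$, and minimality forces $\mathcal{O}_{p'}(G)=1$. These two facts together say $\delta^*_k(G)$ is a nilpotent normal subgroup with trivial $p'$-core, hence a $p$-group contained in $P$, at which point the statement is immediate because every $\delta^*_k$-commutator is a $p$-element and $(n,p)=1$. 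The key structural input is the recursion $\delta^*_{k+1}(G)=\gamma_\infty(\delta^*_k(G))=\delta^*_k(\delta^*_1(G))$, which your sketch never exploits; I would encourage you to rebuild the induction around that identity rather than around a minimal normal subgroup.
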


It seems likely that Lemma \ref{foca} actually holds for all finite groups. In particular, the result in \cite{focal} was proved without the assumption that $G$ is soluble. It seems though that proving Lemma \ref{foca} for arbitrary groups is a complicated task. Indeed, one of the tools used in \cite{focal} is the proof of the Ore Conjecture by M.\,W.\ Liebeck, E.\,A.\ O'Brien, A. Shalev, and P.\,H.\ Tiep \cite{lost} that every element of any finite simple group is a commutator. Recently it was conjectured in \cite{forum} that every element of a finite simple group is a commutator of elements of coprime orders. If this is confirmed, then extending Lemma \ref{foca} to arbitrary groups would be easy. However the conjecture that every element of a finite simple group is a commutator of elements of coprime orders at present is known to be true only for the alternating groups \cite{forum} and the groups ${\rm PSL}(2,q)$ \cite{pellegrini}. Thus, we prove Lemma \ref{foca} only for soluble groups, which is quite adequate for the purposes of the present paper.

Before we embark on the proof of Lemma \ref{foca}, we note a key result from \cite{focal} that we will need.

\begin{lemma} 
\label{Lemma2.2}
Let $G$ be a finite group, and let $P$ be a Sylow $p$-subgroup of $G$. Assume that $N\le L$ are two normal subgroups of $G$, and use the bar notation in the quotient group $G/N$. Let $X$ be a normal subset of $G$ consisting of $p$-elements such that $\overline{P} \cap \overline{L}=\langle \overline{P} \cap \overline{X} \rangle$. Then $P \cap L=\langle P\cap X,P\cap N\rangle$.
\end{lemma}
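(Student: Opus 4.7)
The plan is to prove both inclusions in $P\cap L = \langle P\cap X, P\cap N\rangle$, with essentially all the work being the reverse containment. For the easy direction, I first note that any $x\in P\cap X$ projects to $\bar x\in \bar P\cap\bar X\subseteq \bar P\cap\bar L\subseteq\bar L$ by the hypothesis on the quotient, so $x\in LN=L$; combined with $P\cap N\le L$ this yields $\langle P\cap X,P\cap N\rangle\subseteq P\cap L$.

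For the reverse inclusion, I would take $z\in P\cap L$ and pass to $\bar z\in \bar P\cap\bar L = \langle \bar P\cap\bar X\rangle$. Because each generator is a $p$-element of finite order, any inverse $\bar x^{-1}$ rewrites as a positive power $\bar x^{k-1}$, so $\bar z$ expands to a plain product $\bar x_1\bar x_2\cdots \bar x_r$ with each $\bar x_j\in \bar P\cap\bar X$. The strategy is then to lift each $\bar x_j$ to some $z_j\in P\cap X$ and compare $z_1\cdots z_r$ with $z$ modulo $N$.

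The heart of the argument is the lifting claim: every element of $\bar P\cap\bar X$ admits a representative in $P\cap X$. To establish it, pick $y\in X$ with $\bar y=\bar x$; then $y$ is a $p$-element (since $X$ consists of $p$-elements), and the fact that $\bar y\in\bar P$ places $y$ in the preimage $PN$. A quick order count shows that $P$ is a Sylow $p$-subgroup of $PN$, and since every element of $PN$ factors as $pn$ with $p\in P$ and $n\in N$, conjugation by such a product satisfies $P^{pn}=P^n$; thus every Sylow $p$-subgroup of $PN$ has the form $P^n$ for some $n\in N$. Placing the $p$-element $y$ in such a Sylow subgroup then produces $n\in N$ with $y^n\in P$, and $y^n$ remains in $X$ because $X$ is a normal subset of $G$, giving the required lift in $P\cap X$ (and the same image as $\bar x$, since $\bar n=1$).

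With lifts $z_1,\ldots,z_r\in P\cap X$ of $\bar x_1,\ldots,\bar x_r$ in hand, I obtain $z = z_1\cdots z_r\cdot n$ for some $n\in N$. Since $z$ and all of $z_1,\ldots,z_r$ lie in $P$, the correction $n$ must lie in $P\cap N$, exhibiting $z$ as an element of $\langle P\cap X,P\cap N\rangle$ and completing the proof. The main obstacle, as sketched, is the lifting claim, which couples Sylow-conjugation inside $N$ with the $G$-normality of $X$ to convert a witness $y\in X$ into a witness in $P\cap X$; the rest is formal bookkeeping with cosets.
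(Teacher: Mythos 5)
Your proof is correct and complete. The paper itself gives no proof of this lemma (it is quoted verbatim from the reference \cite{focal}), but your argument is the standard one for such lifting statements: the only nontrivial point is the lifting claim, and your treatment of it --- observing that $P$ is Sylow in $PN$, that every Sylow $p$-subgroup of $PN$ is $P^n$ for some $n\in N$, and then using $G$-normality of $X$ together with $\bar n=1$ to move a witness $y\in X$ into $P\cap X$ without changing its image --- is exactly the mechanism one expects, and the final coset bookkeeping (the correction term lands in $P\cap N$ because $z$ and the lifts all lie in $P$) is handled correctly.
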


We are now ready to prove Lemma \ref{foca}.

\begin{proof}
Let $G$ be a counter-example of minimal order. Then $k\ge 1$.

By induction on the order of $G$, the lemma holds for any proper subgroup and any proper quotient of $G$. We observe that $\delta^*_1(G)<G$ since $G$ is not perfect, and by Lemma \ref{Lemma0}, we have $\delta^*_{k+1}(G)=\delta^*_{k}(\delta^*_1(G))$. Since the result holds for $\delta^*_1(G)$, it follows that $P\cap \delta^*_{k+1}(G)$ is generated by $n$th powers of $\delta^*_k$-commutators in $G$. Note that we made use of Remark 3.2 of \cite{focal}.

If $\delta^*_{k+1}(G)\ne 1$, by induction the result holds for $G/\delta^*_{k+1}(G)$.  Combining this with the fact that $P\cap \delta^*_{k+1}(G)$ can be generated by $n$th powers of $\delta^*_k$-commutators, we get a contradiction by Lemma \ref{Lemma2.2}. Hence $\delta^*_{k+1}(G)=1$. Further $\mathcal{O}_{p'}(G)=1$ since $G$ is a minimal counter-example. Therefore $\delta^*_k(G)\subseteq P$, and it is now obvious that $P \cap \delta^*_k (G)$ is generated by $n$th powers of $\delta^*_k$-commutators lying in $P$. So we have our required contradiction.

\end{proof}

\section{Proofs of the main results}

We mention here a needed result of Schur and Wiegold. The much celebrated Schur Theorem states that if $G$ is a group with $|G/Z(G)|$ finite, then $|G'|$ is finite. It is implicit in the work of Schur that if $|G/Z(G)|=m$, then $|G'|$ is $m$-bounded. However, Wiegold produced a shorter proof of this second statement, which also gives the best possible bound. The reader is directed to Robinson (\cite{Robinson}, pages 102-103) for details.

Additionally, for the proof of Theorem \ref{gitis}, we require the following result from \cite{forum}.
\begin{lemma}
\label{lemmaforum}
Let $G$ be a finite group and let $y_1,\ldots,y_k$ be $\delta^*_k$-commutators in $G$. Suppose the elements $y_1,\ldots,y_k$ normalize a subgroup $N$ such that $(|y_i|,|N|)=1$ for every $i=1,\ldots,k$. Then for every $x\in N$ the element $[x,y_1,\ldots,y_k]$ is a $\delta^*_{k+1}$-commutator.
\end{lemma}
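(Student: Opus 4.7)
The plan is to induct on $k$. A key preliminary observation is that every $\delta_j^*$-commutator is automatically a $\delta_{j-1}^*$-commutator: if $g=[a,b]$ with $a,b$ powers of $\delta_{j-1}^*$-commutators, then by downward induction on $j$ those $a,b$ are also powers of $\delta_{j-2}^*$-commutators, which displays $g$ as a $\delta_{j-1}^*$-commutator.

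For the inductive step ($k\ge 2$), note that $y_1,\ldots,y_{k-1}$, being $\delta_k^*$-commutators, are also $\delta_{k-1}^*$-commutators by the preceding observation. Applying the inductive hypothesis at level $k-1$ (with the same $N$ and $x$), the element
\[
A:=[x,y_1,\ldots,y_{k-1}]
\]
is a $\delta_k^*$-commutator. Since each $y_i$ normalizes $N$ and $x\in N$, we have $A\in N$, so $|A|$ divides $|N|$ and is coprime to $|y_k|$. Setting $B:=y_k$ (a $\delta_k^*$-commutator by hypothesis), the identity $[A,B]=[x,y_1,\ldots,y_k]$ displays $[x,y_1,\ldots,y_k]$ as a $\delta_{k+1}^*$-commutator, since both $A$ and $B$ are $\delta_k^*$-commutators (hence powers of such) of coprime orders.

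The base case $k=1$ is the heart of the matter: show that if $y_1$ is a $\delta_1^*$-commutator and $x\in N$ with $(|x|,|y_1|)=1$, then $[x,y_1]$ is a $\delta_2^*$-commutator. Exploiting the coprime action of $\langle y_1\rangle$ on $N$, write $x=wc$ with $w\in[N,y_1]$ and $c\in C_N(y_1)$, so that $[x,y_1]=[w,y_1]^c$; since the set of $\delta_2^*$-commutators is closed under $G$-conjugation, it suffices to verify the claim for $w\in[N,y_1]$. When $N$ is abelian, the map $n\mapsto[n,y_1]$ is a homomorphism $N\to[N,y_1]$, and Lemma~\ref{Lemma4.29} gives $[[N,y_1],y_1]=[N,y_1]$, so this homomorphism restricts to a surjective endomorphism of the finite group $[N,y_1]$, hence a bijection. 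Thus there exists $v\in[N,y_1]$ with $w=[v,y_1]$, and then $[w,y_1]=\bigl[[v,y_1],y_1\bigr]$ exhibits $[w,y_1]$ as a $\delta_2^*$-commutator: $[v,y_1]$ and $y_1$ are both $\delta_1^*$-commutators of coprime orders.

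The main obstacle is the passage to nonabelian $N$: the map $n\mapsto[n,y_1]$ is no longer a homomorphism, and single-commutator representations in quotients need not lift back to single commutators. I expect the right way around this is to induct on $|N|$ using the derived (or lower central) series, reducing to abelian sections while carefully tracking commutator identities modulo $[N,N]$, and combining this with the abelian base case treated above.
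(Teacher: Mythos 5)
The paper does not actually prove this lemma; it is imported verbatim from \cite{forum}, so your attempt can only be judged on its own terms. Much of your architecture is sound: the monotonicity observation (every $\delta^*_j$-commutator is a $\delta^*_{j-1}$-commutator) is correct; the inductive step is correct and clean ($A=[x,y_1,\ldots,y_{k-1}]$ lies in $N$ because the $y_i$ normalize $N$, is a $\delta^*_k$-commutator by the inductive hypothesis applied to $y_1,\ldots,y_{k-1}$ viewed as $\delta^*_{k-1}$-commutators, and has order dividing $|N|$, hence prime to $|y_k|$, so $[A,y_k]$ is a $\delta^*_{k+1}$-commutator); and the base case for abelian $N$ is correct, including the reduction to $w\in[N,y_1]$ via $N=[N,y_1]C_N(y_1)$ and the conjugation-invariance of the set of $\delta^*_2$-commutators.

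However, the base case for nonabelian $N$ --- which, as you say yourself, is the heart of the matter --- is not proven: you offer only the hope that an induction on $|N|$ along the derived series will work, and you have already named the obstruction that defeats the naive version of that plan. A congruence $[x,y_1]\equiv[a,b]\pmod{N'}$ with $a,b$ of the required form only yields $[x,y_1]=[a,b]n$ for some $n\in N'$, and there is no mechanism for absorbing $n$ into a single commutator of the required shape, precisely because $v\mapsto[v,y_1]$ is no longer a homomorphism and the set $\{[v,y_1]:v\in N\}$ need not be a subgroup. What your strategy really requires is that every element of $\{[x,y_1]:x\in N\}$ equal $[[v,y_1],y_1]$ for some $v$, equivalently that the well-defined map $C_N(y_1)v\mapsto C_N(y_1)[v,y_1]$ on cosets be injective (or at least that every relevant coset of $C_N(y_1)$ contain a power of a $\delta^*_1$-commutator of order prime to $|y_1|$); this does not follow from $[N,y_1,y_1]=[N,y_1]$ alone once $N$ is nonabelian, and you supply no argument for it. So the proposal has a genuine gap. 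It is worth noting that in the one place this paper invokes the lemma, $N$ is the abelian Sylow subgroup $P$, so your abelian case would suffice for that application; but as a proof of the lemma as stated, the argument is incomplete.
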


Now we are ready to begin.

\begin{proof}[Proof of Theorem \ref{gut}]
Let $X$ be the set of all $\gamma^*_k$-commutators. We wish to show that if  $|X|=m$, then $|\gamma^*_k(G)|$ is $m$-bounded. For convenience we write $K$ for $\langle X\rangle$. Of course, $K=\gamma_\infty (G)$.

The subgroup $C_G(X)$ has index $\le m!$, so $|K/Z(K)|\le m!$ too. By Schur, $K'$ has $m$-bounded order. Therefore, by passing to the quotient, we may assume  $K'=1$, and so $K$ is abelian with $G$ metanilpotent.

It is enough to bound the order of each Sylow subgroup of $K$. We choose a Sylow $p$-subgroup $P$. By passing to the quotient $G/\mathcal{O}_{p'}(G)$, we may assume $K=P$.

By Lemma \ref{Lemma1}, a Hall $p'$-subgroup $H$ of $G$ satisfies $P=[P,_{k-1} H]$.
We know that $P$ is abelian and $P$ is normal in $PH$. 

We denote the set $\{[x,h_1,\ldots,h_{k-1}]: x\in P,h_1,\ldots,h_{k-1}\in H \}$ by $\hat{X}$.

For $x\in P, h_1,\ldots,h_{i-1}\in H$, where $i\ge 2$, we note that $[x,h_1,\ldots,h_{i-1}]$ is a $\gamma^*_i$-commutator. Therefore $\hat{X}\subseteq X$, and $|\hat{X}|\le m$.

By Lemma \ref{Lemma3}, it follows that $|[P,_{k-1}H]|$ is $m$-bounded.
Appealing to Lemma \ref{Lemma1}, we conclude that $|P|$ is $m$-bounded.
\end{proof}

\begin{proof}[Proof of Theorem \ref{gitis}]
Let $X$ be the set of $\delta^*_k$-commutators in $G$. We wish to show here that if $|X|=m$, then $|\delta^*_k(G)|$ is $m$-bounded.
We recall that $\delta^*_k(G)=\gamma_\infty(\delta^*_{k-1}(G))$. For ease of notation we define $Q:=\delta^*_{k-1}(G)$, and we write $K$ for $\delta^*_k(G)$.

The subgroup $C_G(X)$ has index $\le m!$ in $G$, so $|K/Z(K)|\le m!$ and as in the proof of Theorem \ref{gut}, we may assume $K'=1$. Hence $K$ is assumed to be abelian with $Q$ metanilpotent. In what follows, we now restrict to the group $Q$.

It is sufficient to show that the order of each Sylow subgroup of $K$ is $m$-bounded. We choose $P$ a Sylow $p$-subgroup of $K$. By passing to the quotient $G/\mathcal{O}_{p'}(G)$, we may assume $K=P$.

By Lemma \ref{Lemma1}, a Hall $p'$-subgroup $H$ of $Q$ satisfies $P=[P,H]$. By Lemma \ref{foca}, since $H$ is generated by its Sylow subgroups, we have $H$ is generated by a normal subset $B$ of powers of $\delta^*_{k-1}$-commutators that are of $p'$ order.

Lemma \ref{Pavel} now implies that $[P,H]$ is generated by subgroups $[P,b_1,\ldots,b_k]$ for $b_1,\ldots,b_k \in B$. By Lemma \ref{lemmaforum}, for $x\in P$ we have  $[x,b_1,\ldots,b_k]$ is a $\delta^*_k$-commutator, and we deduce that $|[P,b_1,\ldots,b_k]|$ is $m$-bounded.

It follows that the number of generators of $[P,H]$ is at most $m$, and futhermore the exponent of $[P,H]$ is $m$-bounded. Hence, the finite abelian $p$-group $P=[P,H]$ has $m$-bounded order.
\end{proof}


\end{document}